\documentclass{amsart}
\usepackage{verbatim}
\usepackage{rotating} 
\usepackage{amssymb}
\usepackage{mathrsfs,amsfonts,amsmath,amssymb,epsfig,amscd,xy,amsthm,pb-diagram} 



\hyphenation{arch-i-med-e-an}


\newtheorem{theorem}{Theorem}[section]
\newtheorem{proposition}[theorem]{Proposition}

\newtheorem{lemma}[theorem]{Lemma}

\newtheorem{corollary}[theorem]{Corollary}
\newtheorem{question}[theorem]{Question}

\theoremstyle{plain}

\theoremstyle{remark}

\newcommand{\F}{{\mathbb F}}
\newcommand{\Q}{{\mathbb Q}}

\newcommand{\Z}{{\mathbb Z}}
\newcommand{\N}{{\mathbb N}}

\newcommand{\cA}{{\mathcal A}}

\newcommand{\cK}{{\mathcal K}}

\DeclareMathOperator{\rad}{rad}

\newcommand{\bF}{{\mathbb F}}

\newcommand{\cP}{\mathcal{P}}

\newcommand{\scrX}{\mathscr{X}}

\newcommand{\scrS}{\mathscr{S}}



\author{Jason P.~Bell}
\address{
Jason P.~Bell\\
University of Waterloo\\
Department of Pure Mathematics\\
Waterloo, Ontario, Canada N2L 3G1}
\email{jpbell@uwaterloo.ca}

\author{Khoa D.~Nguyen}
\address{
Khoa D.~Nguyen \\
Department of Mathematics and Statistics\\
University of Calgary\\
AB T2N 1N4, Canada
}
\email{dangkhoa.nguyen@ucalgary.ca}

\keywords{Ruzsa's conjecture, polynomials, finite fields}
\subjclass[2010]{Primary: 11T55.}

\begin{document}
	\title[Ruzsa's conjecture for polynomials]{An analogue of Ruzsa's conjecture for polynomials over finite fields}
	\date{}
	\begin{abstract}
	In 1971, Ruzsa conjectured that if $f:\ \N\rightarrow\Z$ with
	$f(n+k)\equiv f(n)$ mod $k$ for every $n,k\in\N$ and $f(n)=O(\theta^n)$ with $\theta<e$ then $f$ is a polynomial. In this paper, we investigate the analogous problem for the ring of polynomials over a finite field.		 
	\end{abstract}
	
	\maketitle
	
	\section{Introduction}
		Let $\N$ denote the set of positive integers and let $\N_0=\N\cup\{0\}$. 
		A strong form of a conjecture by Ruzsa is the following assertion.
		Suppose that  $f:\ \N_0\rightarrow \Z$ satisfies 
		the following 2 properties:
		\begin{itemize}
		\item [(P1)] $f(n+p)\equiv f(n)$ mod $p$ for 
		every prime $p$ and every $n\in\N_0$;
		\item [(P2)] $\displaystyle\limsup_{n\to\infty}\frac{\log\vert f(n)\vert}{n}<e$.
		\end{itemize}
		Then $f$ is necessarily a polynomial. The original form allows the version of (P1) in which $p$ is not necessarily a prime. Hall \cite{Hal71_OT} gave an example constructed by Woodall showing that the upper bound $e$ in (P2) is optimal. The reasoning behind this upper bound as well as the Hall-Woodall example is the (equivalent version of the) Prime Number Theorem stating that the product of primes up to $n$ is $e^{n+o(n)}$ and the fact that the residue class of $f(n)$ modulo this product is determined uniquely by $f(0),\ldots,f(n-1)$ thanks to (P1). In 1971, Hall \cite{Hal71_OP} and Ruzsa \cite{Ruz71_OC} independently proved the following result.
		\begin{theorem}[Hall-Ruzsa, 1971]
		Suppose that $f:\N_0\rightarrow \Z$ satisfies (P1) and
		$$\limsup_{n\to\infty}\frac{\log\vert f(n)\vert}{n}<e-1$$
		then $f$ is a polynomial.
		\end{theorem}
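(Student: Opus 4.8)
The plan is to prove that the iterated forward differences of $f$ at the origin vanish from some point on, and then invoke Newton's forward-difference formula. Write $\Delta g(n)=g(n+1)-g(n)$, so that $\Delta^{N}f(0)=\sum_{j=0}^{N}(-1)^{N-j}\binom{N}{j}f(j)$. The identity $f(n)=\sum_{j=0}^{n}\binom{n}{j}\Delta^{j}f(0)$ holds for every function $f\colon\N_{0}\to\Z$ (proved by a one-line induction on $n$ using $\binom{n+1}{j}=\binom{n}{j}+\binom{n}{j-1}$), so if $\Delta^{j}f(0)=0$ for all $j\ge N_{0}$ then $f$ coincides on $\N_{0}$ with the polynomial $P(x)=\sum_{j=0}^{N_{0}-1}\binom{x}{j}\Delta^{j}f(0)$ of degree $<N_{0}$, and we are done. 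Thus it suffices to show $\Delta^{N}f(0)=0$ for all large $N$, which I would do by a divisibility-versus-size argument.

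For the divisibility, fix a prime $p$. Since $\binom{p}{i}\equiv 0\pmod{p}$ for $0<i<p$, for every $m\in\N_{0}$ we have $\Delta^{p}f(m)\equiv(-1)^{p}f(m)+f(m+p)\pmod{p}$, which is $\equiv 0$ by (P1) (for $p=2$, $(-1)^{2}f(m)+f(m+2)\equiv 2f(m)\equiv 0$). Hence $\Delta^{p}f=p\,g$ for some $g\colon\N_{0}\to\Z$, and since $\Delta^{N}=\Delta^{N-p}\circ\Delta^{p}$ for $p\le N$ we get $\Delta^{N}f(0)=p\,\Delta^{N-p}g(0)\in p\Z$. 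As this holds for every prime $p\le N$, the integer $\prod_{p\le N}p$ divides $\Delta^{N}f(0)$.

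For the size, write the growth hypothesis in the form $|f(n)|\le C\theta^{n}$ for all $n$, with a fixed constant $C$ and a fixed $\theta<e-1$; then $|\Delta^{N}f(0)|\le C\sum_{j=0}^{N}\binom{N}{j}\theta^{j}=C(1+\theta)^{N}$. The point of the hypothesis $\theta<e-1$ is precisely that then $1+\theta<e$, i.e.\ $\log(1+\theta)<1$. On the other hand, by the Prime Number Theorem (in the form quoted in the introduction) $\prod_{p\le N}p=e^{N(1+o(1))}$, so fixing $\varepsilon>0$ with $1-\varepsilon>\log(1+\theta)$ we have $\prod_{p\le N}p\ge e^{(1-\varepsilon)N}>C(1+\theta)^{N}\ge|\Delta^{N}f(0)|$ for all sufficiently large $N$. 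Since $\prod_{p\le N}p$ divides $\Delta^{N}f(0)$, this forces $\Delta^{N}f(0)=0$ for every $N\ge N_{0}$, and the first paragraph finishes the proof.

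The argument is short and its only substantial ingredient is the Prime Number Theorem; there is no single hard step. If anything, the point to watch is the interplay of the two estimates: the guaranteed divisor $\prod_{p\le N}p$ has size only $e^{N+o(N)}$, while the triangle-inequality bound on $\Delta^{N}f(0)$ is $(1+\theta)^{N}$ rather than the heuristically expected $\theta^{N}$, and the two cross over exactly at $\theta=e-1$. Reaching the conjectural threshold $\theta<e$ would require either exhibiting a strictly larger provable divisor of $\Delta^{N}f(0)$ or exploiting cancellation in the alternating sum $\sum_{j}(-1)^{N-j}\binom{N}{j}f(j)$ forced by (P1), and neither is available by these elementary means.
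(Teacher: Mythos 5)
Your proof is correct and is essentially the classical Hall--Ruzsa argument (the paper only cites this theorem and does not reprove it): the congruence $\binom{p}{i}\equiv 0\pmod p$ for $0<i<p$ combined with (P1) yields $\prod_{p\le N}p\mid \Delta^N f(0)$, the trivial bound $|\Delta^N f(0)|\le C(1+\theta)^N$ beats $e^{N+o(N)}$ precisely when $\theta<e-1$, and Newton's forward-difference formula finishes. Note that you have (rightly) read the growth hypothesis as $\limsup_n |f(n)|^{1/n}<e-1$ rather than the literal $\limsup_n \frac{\log|f(n)|}{n}<e-1$; this is the intended normalization, consistent with the abstract and with the thresholds $e$ and $e^{0.75}$ quoted elsewhere in the introduction.
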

		
		The best upper bound was obtained in 1996 by 
		Zannier \cite{Zan96_OP} by 
		extending earlier work of Perelli and Zannier \cite{Zan82_AN,PZ84_OR}:
		\begin{theorem}[Zannier, 1996]
		Suppose that $f:\N_0\rightarrow \Z$ satisfies (P1) and
		$$\limsup_{n\to\infty}\frac{\log\vert f(n)\vert}{n}<e^{0.75}$$
		then $f$ is a polynomial.
		\end{theorem}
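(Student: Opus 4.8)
The plan is to translate the statement into a question about the finite-difference (Mahler) coefficients of $f$ and then play their archimedean size against their divisibility. Put $a_k=(\Delta^k f)(0)=\sum_{j=0}^k(-1)^{k-j}\binom kj f(j)\in\Z$, so that $f(n)=\sum_{k=0}^n\binom nk a_k$ for every $n$; hence $f$ is a polynomial precisely when $a_k=0$ for all large $k$. Supposing towards a contradiction that $f$ is not a polynomial, the set $K=\{k:a_k\ne 0\}$ is infinite. Writing $\theta:=\limsup_n|f(n)|^{1/n}$, the defining sum gives the crude bound $\limsup_k|a_k|^{1/k}\le 1+\theta$, and by hypothesis $\theta<e^{3/4}$.

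The elementary input — which already yields the Hall--Ruzsa exponent — is a divisibility estimate coming from (P1). For a prime $p$ the coefficients $\binom pj$ with $0<j<p$ vanish mod $p$, so $(\Delta^p f)(n)\equiv(-1)^pf(n)+f(n+p)\equiv f(n+p)-f(n)\equiv 0\pmod p$; thus $\Delta^pf$ is $p\Z$-valued, and since $(\Delta^k f)(0)$ is a $\Z$-linear combination of values of $\Delta^pf$ once $k\ge p$, we get $p\mid a_k$ whenever $p\le k$. Hence $a_k$ is divisible by $\prod_{p\le k}p=e^{(1+o(1))k}$ by the Prime Number Theorem, so $|a_k|\ge e^{(1+o(1))k}$ for $k\in K$; combined with the crude bound this forces $1+\theta\ge e$, which is the Hall--Ruzsa bound. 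The content of Zannier's theorem is therefore the window $e-1\le\theta<e^{3/4}$, in which this argument alone does not suffice.

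To cover that window I would reorganize everything around the generating series $\Phi(z)=\sum_{n\ge 0}f(n)z^n\in\Z[[z]]$ and exploit that the congruences (P1), available for \emph{every} prime, place $\Phi$ in a situation governed by the theory of $G$-functions. Indeed periodicity of $f\bmod p$ gives $\Phi(z)\equiv U_p(z)/(1-z^p)\pmod p$ with $\deg U_p<p$, so the reduction of $\Phi$ modulo each prime $p$ is a rational function whose numerator and denominator have degree $\le p$, while at the archimedean place $\Phi$ converges for $|z|<\theta^{-1}$; moreover we may assume $\Phi$ is not rational, since a rational $\Phi$ satisfying (P1) has denominator a power of $1-z$ (its reduction modulo every prime being a power of $1-z$) and hence corresponds to a polynomial $f$. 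Now run a Hermite--Pad\'e argument: for balanced parameters $A$ and $B\asymp A$ produce, via Siegel's lemma, a nonzero $Q\in\Z[z]$ of degree $\le B$ and controlled height such that $\ord_0(Q\Phi-P)$ is large, where $P$ is the Taylor polynomial of $Q\Phi$ of degree $\le A$. Then $R=Q\Phi-P$ is nonzero, its first nonzero Taylor coefficient is a nonzero integer, hence of absolute value $\ge 1$; the growth bound estimates this coefficient above by roughly $(\text{height of }Q)\cdot\theta^{\ord_0 R}$; and the mod-$p$ rationality forces $R\equiv 0\pmod p$ for every prime $p$ below about $\min(A,B)$ (a polynomial of degree $\approx\max(A,B)+p$ vanishing past its degree is zero mod $p$), so that coefficient is in addition divisible by $\approx e^{\min(A,B)}$. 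Balancing the two degrees, the order of vanishing, and — decisively — the exact $p$-adic valuations salvaged for the small primes against the Prime Number Theorem, one is led to an impossible numerical inequality, contradicting $\theta<e^{3/4}$. (One may instead package the same information through the Hankel determinants $\det(f(i+j))_{0\le i,j\le N}$, nonzero for infinitely many $N$ and divisible by $\prod_{p\le N}p^{\,N-p}$ because the Hankel matrix of a $p$-periodic sequence has $\F_p$-rank $\le p$; but to beat the bound $e-1$ this way one must use rectangular rather than square determinants and track the valuations carefully.)

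The crux — and the hard part — is precisely this last optimization: naive choices in the Pad\'e scheme recover only $e-1$, or the intermediate Perelli--Zannier improvements, so reaching the exponent $3/4$ requires (i) a sharp height bound for the auxiliary polynomial $Q$, better than a black-box Siegel's lemma, so that the archimedean loss contributes only a factor $\theta^{(3/4+o(1))\ord_0 R}$; (ii) upgrading ``$p\mid R$'' to the finer $p$-adic savings for the small primes and summing these with the correct constant via the Prime Number Theorem; and (iii) controlling the Pad\'e table so that the remainder vanishes only to the expected order — so its first nonzero coefficient occurs early enough — together with the non-degeneracy (rank/Wronskian-type) input guaranteeing that the auxiliary form does not collapse. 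Carrying (i)--(iii) out uniformly over all primes $p\le A$ is the technical heart, and it is there that the full strength of the $G$-function machinery, in the spirit of Bombieri--Dwork and Chudnovsky--Andr\'e, enters.
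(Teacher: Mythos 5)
This theorem is quoted by the paper from Zannier's 1996 article and is not proved in the paper, so there is no internal proof to compare against; your argument has to stand on its own. Your opening paragraph, together with the divisibility estimate $p\mid a_k$ for all primes $p\le k$ and the Prime Number Theorem, is a correct and complete proof of the Hall--Ruzsa theorem (exponent $e-1$). The difficulty is that everything past that point --- which is the entire content of the statement you were asked to prove, namely the passage from $e-1$ to $e^{0.75}$ --- is a plan rather than a proof. The decisive step (``one is led to an impossible numerical inequality'') is asserted, not derived, and you yourself label items (i)--(iii) as the ``technical heart'' without carrying any of them out. If one runs your Hermite--Pad\'e scheme with the stated ingredients (Siegel's lemma height for $Q$, the archimedean bound $\theta^{\ord_0 R}$ on the first nonzero coefficient of $R$, and divisibility by $\prod_{p\le\min(A,B)}p\approx e^{\min(A,B)}$), the resulting inequality reproduces only the exponent $e-1$ or the intermediate Perelli--Zannier improvements; nothing in the sketch identifies where the specific saving that produces the exponent $3/4$ comes from.

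The actual route, as the introduction of the present paper indicates, has two genuinely separate stages: first Perelli--Zannier prove that $\Phi(z)=\sum f(n)z^n$ is D-finite over $\Q(z)$ --- itself a nontrivial Pad\'e/Shidlovsky-type argument and not a formal consequence of the congruences --- and then Zannier applies the arithmetic theory of $G$-operators (Chudnovsky--Chudnovsky, Andr\'e, Dwork--G\'erotto--Sullivan) to the minimal operator annihilating $\Phi$, using the mod-$p$ rationality at every prime to control its size and ultimately force $\Phi$ to be rational. Your proposal never establishes D-finiteness, never exhibits the quantity whose estimation yields $0.75$, and invokes ``the full strength of the $G$-function machinery'' only as a closing gesture. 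So there is a genuine gap: as written, the argument proves a strictly weaker theorem and defers the real content to optimizations that are named but not executed.
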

		In fact, the author remarked \cite[pp.~400--401]{Zan96_OP} that the explicit upper bound $e^{0.75}$ was chosen to avoid cumbersome formulas and it was possible to increase it slightly. The method of \cite{Zan96_OP} uses the fact that the generating series $\sum f(n)x^n$ is D-finite over $\Q$ (i.e. it satisfies a linear differential equation with coefficients in $\Q(x)$) \cite[Theorem~1.B]{PZ84_OR} then applies deep results on the arithmetic of linear differential equations \cite{CC85_AO,DGS94_AI}.
		
		This paper is motivated by our recent work on D-finite series \cite{BNZ19_DF} and a review of Ruzsa's conjecture. From now on, let $\bF$ be the finite field of order $q$
		and characteristic $p$, let $\cA=\bF[t]$, and let $\cK=\bF(t)$. 
		We have the usual degree map $\deg:\ \cA\rightarrow\N_0\cup\{-\infty\}$.
		A map
		$f:\ \cA\rightarrow \cA$ is called a polynomial map
		if it is given by values on $\cA$ of an element of $\cK[X]$. For every $n\in \N_0$, let $\cA_n=\{A\in\cA:\ \deg(A)=n\}$, $\cA_{<n}=\{A\in\cA:\ \deg(A)<n\}$,
		and $\cA_{\leq n}=\{A\in\cA:\ \deg(A)\leq n\}$. Let $\cP\subset\cA$ be the set
		of irreducible polynomials; the sets $\cP_{n}$, $\cP_{< n}$, and
		$\cP_{\leq n}$ are defined similarly. The superscript $+$ is used to denote the subset consisting of all the monic polynomials, for example $\cA^{+}$, $\cA_{n}^{+}$, $\cP_{\leq n}^{+}$, etc. From the well-known identity \cite[pp.~8]{Ros01_NT}:
		$$\prod_{d\mid n}\prod_{P\in\cP_{d}^+}P=t^{q^n}-t$$
        we have
        \begin{equation}\label{eq:q^n to 2q^n}
        q^n\leq \deg\left(\prod_{P\in\cP_{\leq n}^{+}}P\right)<2q^n
        \end{equation}
        for every $n\in\N$. In view of the reasoning behind Ruzsa's conjecture, it is natural to ask the following:
        
        \begin{question}\label{q:1}
        Let $f:\ \cA\rightarrow\cA$ satisfy the following 2 properties:
        \begin{itemize}
			\item [(P3)] $f(A+BP)\equiv f(A) \bmod P$ for every $A,B\in\cA$ and $P\in\cP$;
			\item [(P4)] $\displaystyle\limsup_{\deg(A)\to\infty}\frac{\log\deg(f(A))}{\deg(A)}<q$.         
        \end{itemize}
       Is it true that $f$ is a polynomial map?
        \end{question}
		
		Note that (P3) should be the appropriate analogue of (P1): over the natural numbers, iterating (P1) yields $f(n+bp)\equiv f(n) \bmod p$ for every $n,b\in\N_0$
		and prime $p$. On the other hand, over $\cA$, due to the presence of characteristic $p$, iterating the congruence condition $f(A+P)\equiv f(A) \bmod P$
		for $A\in\cA$ and $P\in\cP$ is not enough to yield (P3). By the following example that is similar to the one by Hall-Woodall, we have that the upper bound $q$ in (P4) cannot be increased. Fix a \emph{total} order $\prec$ on $\cA$ such that $A\prec B$ whenever $\deg(A)<\deg(B)$. We define $g:\ \cA\rightarrow\cA$ inductively. First, we assign arbitrary values of $g$ at the constant polynomials. Let $n\in\N$, $B\in \cA_n$, and assume that we have defined $g(A)$ for every $A\in\cA$ with $A\prec B$
		such that:
		$$g(A)\equiv g(A_1) \bmod P\ \text{for every $A,A_1\prec B$ and prime $P\mid(A-A_1)$}.$$
		For every $P\in \cP_{\leq n}^{+}$, let $R_P\in\cA$ with $\deg(R_P)<\deg(P)$
		such that $B\equiv R_P$ mod $P$. By the Chinese Remainder Theorem, there exists a unique $R\in \cA$ with $\deg(R)<\deg\left(\displaystyle\prod_{P\in\cP_{\leq n}^{+}}P\right)$ such that $R\equiv f(R_P)$ mod $P$ for every $P\in \cP_{\leq n}^{+}$. Then we define
		$$g(B):=R+\prod_{P\in\cP_{\leq n}^{+}}P.$$
		It is not hard to prove that $g$ satisfies Property (P3) (with $g$ in place of $f$) and for every $n\in\N$, $B\in\cA_n$, we have $\deg(g(B))\in [q^n,2q^n)$
		by \eqref{eq:q^n to 2q^n}. This latter property implies that
		$g$ cannot be a polynomial map.
		
		Our main result implies the affirmative answer to Question~\ref{q:1}; 
		in fact
		we can replace (P4) by the much weaker condition that 
		$\deg(f(A))$ is not too small compared to
		$\displaystyle\frac{q^{\deg(A)}}{\deg(A)}$:
		\begin{theorem}\label{thm:new main}
		Let $f:\ \cA\rightarrow\cA$ such that $f$ satisfies 
		Property (P3) 
		in Question~\ref{q:1}
		and 
		\begin{equation}\label{eq:1/27q}
		\deg(f(A))< \frac{q^{\deg(A)}}{27q\deg(A)}\ \text{when $\deg(A)$ is sufficiently large}.
		\end{equation} 
		Then $f$ is a polynomial map.
		\end{theorem}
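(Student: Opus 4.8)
The plan is to show that $f$ agrees with one polynomial by proving that a sequence of interpolating polynomials stabilizes, the stabilization being forced by a conflict between an arithmetic lower bound coming from (P3) and a size upper bound coming from \eqref{eq:1/27q}.

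\emph{Step 1 (Interpolation).} For each $n\in\N$, since $\lvert\cA_{<n}\rvert=q^n$ there is a unique $L_n\in\cK[X]$ with $\deg_X L_n<q^n$ and $L_n(A)=f(A)$ for all $A\in\cA_{<n}$. It suffices to prove that $(L_n)$ is eventually constant, for then $L_n=L$ with $L(A)=f(A)$ for every $A$ (as $\cA=\bigcup_n\cA_{<n}$), so $f$ is the polynomial map given by $L$. Equivalently, writing $E_n(X)=\prod_{A\in\cA_{<n}}(X-A)$, a monic $\bF_q$-additive polynomial of degree $q^n$ with coefficients in $\cA$, one has $L_{n+1}-L_n=E_nQ_n$ for a unique $Q_n\in\cK[X]$ with $\deg_X Q_n<q^{n+1}-q^n$, and it is enough to show $Q_n=0$ for all large $n$. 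To keep the denominators of $L_n$ as small as possible I would expand in the Carlitz--Wagner basis $\{G_k\}_{k\ge0}$ of the $\cA$-module of $\cA$-valued polynomials ($\deg_X G_k=k$), writing $f=\sum_k c_kG_k$ formally with $c_k\in\cA$ determined by the values of $f$, so that $L_n=\sum_{k<q^n}c_kG_k$ and the goal becomes: $c_k=0$ for all large $k$.

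\emph{Step 2 (Producing a witness).} Suppose for contradiction that infinitely many $c_k$ are nonzero; fix a large $n$ with $Q_n\ne0$. Since $L_{n+1}-L_n=E_nQ_n$ vanishes on $\cA_{<n}$ but is a nonzero polynomial of $X$-degree $<q^{n+1}$, it cannot vanish on all of $\cA_{\le n}$ (else $\prod_{A\in\cA_{\le n}}(X-A)$, of degree $q^{n+1}$, would divide it); hence there is $B$ with $\deg B=n$ and $f(B)=L_{n+1}(B)\ne L_n(B)$.

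\emph{Step 3 (Divisibility lower bound).} Let $c_n^\circ\in\cA$ be a common denominator for $L_n$ coming from the Carlitz--Wagner description (so $c_n^\circ L_n(B)\in\cA$ and $c_n^\circ(f(B)-L_n(B))\in\cA\setminus\{0\}$). The heart of the matter is to propagate (P3) through the interpolation formula: for each prime $P$ of degree $e\le n$, the congruence $f(A)\equiv f(A')\bmod P$ whenever $A\equiv A'\bmod P$, combined with the facts that $\cA_{<n}\to\cA/P$ is $q^{n-e}$-to-one and that the $\bF_q$-vector space structure of $\cA_{<n}$ pins down the $P$-adic distances within each fibre, should force
\[
v_P\!\bigl(c_n^\circ(f(B)-L_n(B))\bigr)\ \ge\ v_P\bigl(E_n(B)\bigr)-O(n/e),
\]
together with the clean identities $v_P(E_n(B))=\sum_{k\ge1}q^{n-ke}$ (valid precisely because $\deg B=n$) and $\sum_P v_P(E_n(B))\deg P=\deg E_n(B)=nq^n$. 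Summing over all primes and using \eqref{eq:q^n to 2q^n} to bound the correction $\sum_{e\le n}(n/e)\deg\bigl(\prod_{P\in\cP_e^+}P\bigr)=O(q^n)$, one gets $\deg\bigl(c_n^\circ(f(B)-L_n(B))\bigr)\ge nq^n-O(q^n)$.

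\emph{Step 4 (Size bound and conclusion).} On the other hand $\deg\bigl(c_n^\circ(f(B)-L_n(B))\bigr)\le\deg c_n^\circ+\deg\bigl(f(B)-L_n(B)\bigr)$, where $\deg f(B)<q^n/(27qn)$ by \eqref{eq:1/27q} (as $\deg B=n$ is large) and where $L_n(B)$ is a proper fraction of negligible size — because $L_n$, a polynomial of $X$-degree $<q^n$ fitted to the values $f(A)$ for $A\in\cA_{<n}$, which are of size $o(q^n)$ by \eqref{eq:1/27q} (the finitely many small-degree values contributing only a constant). Hence $\deg\bigl(c_n^\circ(f(B)-L_n(B))\bigr)\le\deg c_n^\circ+q^n/(27qn)$. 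The theorem now reduces to the numerical inequality $nq^n-O(q^n)>\deg c_n^\circ+q^n/(27qn)$, i.e.\ $\deg c_n^\circ$ must be smaller than $nq^n$ by more than a bounded multiple of $q^n$; this is exactly the gain of the Carlitz--Wagner denominator (of degree $\sim(q-1)(n-1)q^{n-1}=nq^n-\Theta(nq^{n-1})$) over the naive Lagrange denominator, and the slack between the two, weighed against the $O(q^n)$ prime-counting errors, is what the constant $27q$ in \eqref{eq:1/27q} is calibrated to absorb.

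The main obstacle is Step 3: one must track, prime by prime, exactly how the elementary congruence (P3) is amplified by the combinatorial structure of $\cA_{<n}$ inside each residue class modulo $P$ into divisibility of $c_n^\circ(f(B)-L_n(B))$ by nearly the full power $P^{v_P(E_n(B))}$ — losing only $O(n/e)$ in the exponent at a prime of degree $e$, which forces one to exploit cancellations among the Lagrange/Carlitz terms and to use the Carlitz--Wagner picture to keep $\deg c_n^\circ$ minimal — so that the product of these local contributions has degree genuinely exceeding the size bound; all the constants in the final inequality have to be balanced simultaneously.
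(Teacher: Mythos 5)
Your proposal is a sketch of the Hall--Ruzsa finite-difference strategy transplanted to $\cA$ via Lagrange/Carlitz--Wagner interpolation, but its central step is not proved, and it is precisely the whole theorem. Step 3 asserts (``should force'') that (P3) amplifies into the divisibility $v_P\bigl(c_n^\circ(f(B)-L_n(B))\bigr)\ge v_P(E_n(B))-O(n/e)$ for every prime $P$ of degree $e\le n$; you yourself flag this as ``the main obstacle.'' Nothing in the proposal establishes it, and it is far from routine: over $\Z$ the analogous divisibility of $\Delta^k f(0)$ comes from iterating the operator $\Delta^p$, which kills residues mod $p$ because of the shape of binomial coefficients, whereas here the congruence structure is genuinely different (the paper's introduction already notes that in characteristic $p$ one cannot even iterate $f(A+P)\equiv f(A)\bmod P$ to get (P3)). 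In addition, Step 4 contains an unjustified and, as stated, false claim: that $L_n(B)$ is ``of negligible size.'' Even with coefficients $c_k\in\cA$ of bounded degree, the basis values $G_k(B)$ for $\deg B=n$ and $k<q^n$ have degree $\sum_i k_i(n-i)q^i$, which for $k$ close to $q^n$ is of order $q^{n+1}/(q-1)$ --- vastly larger than the bound $q^n/(27qn)$ on $\deg f(B)$ from \eqref{eq:1/27q}. So the upper bound in Step 4 requires control of the $c_k$, which is essentially what you are trying to prove; the argument is circular unless you work directly with the difference coefficients $c_k$ rather than with $f(B)-L_n(B)$, and even then Step 3 remains open. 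The bookkeeping for $\deg c_n^\circ$ in Step 4 is also off ($(q-1)(n-1)q^{n-1}=nq^n-q^n-(n-1)q^{n-1}$, not $nq^n-\Theta(nq^{n-1})$ in the regime $n\le q$), which matters because the whole conclusion is a tight numerical comparison.

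For contrast, the paper avoids interpolation and difference operators entirely. It first constructs, by pigeonhole on coefficients in $\F_q$, a nonzero $Q(X,Y)\in\cA[X,Y]$ with $Q(A,f(A))=0$ for all $A$ of degree $\le M$, and then uses (P3) together with the fact that $\deg\bigl(\prod_{P\in\cP_{\le D}^+}P\bigr)\ge q^D$ to propagate the vanishing to all of $\cA$; this yields $\deg f(A)\le C_3\deg(A)+C_4$. A second auxiliary-polynomial argument, applied along the geometric sequence $U^n$ and combined with lower bounds on $\deg\rad\bigl((U^n-1)\cdots(U^{n-m}-1)\bigr)$ and with the finiteness of solutions of $x+y=1$ in a finitely generated subgroup of $\cK^*$ (characteristic-$p$ $S$-unit theorem), then forces $P(A)f(A)+Q(A)=0$ identically, whence $f$ is a polynomial map. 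If you want to pursue your route, the task is to prove the Step 3 divisibility from scratch; as written, the proposal does not constitute a proof.
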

		
	    There is nothing special about the constant $1/(27q)$ in
	    \eqref{eq:1/27q} and one can certainly improve it by optimizing the
	    estimates in the proof. It is much more interesting to know
	    if the function $q^{\deg(A)}/\deg(A)$ in \eqref{eq:1/27q} 
	    can be replaced by a larger function (see Section~\ref{sec:further questions}). There are significant differences between Ruzsa's conjecture and Question~\ref{q:1} despite the apparent similarities at first sight. Indeed none of the key techniques in the papers \cite{PZ84_OR,Zan96_OP} seem to be applicable in
	    our situation. Obviously, the crucial result used in \cite{Zan96_OP} that
	    the generating series $\sum f(n)x^n$ is $D$-finite has no counterpart here. 
	   The proof of the main result of \cite{PZ84_OR} relies on a nontrivial linear recurrence
	   relation of the form $c_df(n+d)+\ldots+c_0f(n)=0$. Over the integers, such a
	   relation will allow one to determine $f(n)$ for \emph{every} $n\geq d$  once 
	   one knows $f(0),\ldots,f(n-1)$. On the other hand, for Question~\ref{q:1}, while it seems possible to imitate the arguments in \cite{PZ84_OR} to obtain
	   a recurrence relation of the form $c_df(A+B_d)+\ldots+c_0f(A+B_0)=0$ for
	   $A\in\cA$ with $d\in\N$ and $B_0,\ldots,B_d\in\cA$, such a relation does not seem as helpful: when $\deg(A)$ is large, one cannot use the relation to relate $f(A)$ to
	   the values of $f$ at smaller degree polynomials. Finally, the technical trick of using the given congruence condition to obtain the vanishing on $[2M_0,(2+\epsilon)M_0]$ from the vanishing on $[0,M_0]$ (see \cite[pp.~11--12]{PZ84_OR} and \cite[pp.~396--397]{Zan96_OP}) does not seem applicable here.

	    The proof of Theorem~\ref{thm:new main} consists
		of 2 steps. The first step is to show that the 
		points $(A,f(A))$ for $A\in\cA$ belong to an
		algebraic plane curve over $\cK$, then it follows
		that $\deg(f(A))$ can be bounded above by 
		a linear function in $\deg(A)$. The second step,
		 which might be of independent interest,
		treats the more general problem in which $f$ 
		satisfies (P3) and there exists a special sequence
		$(A_n)_{n\in\N_0}$ in $\cA$ such that $\deg(f(A_n))$
		is bounded above by a linear function in $\deg(A_n)$. 
		Both steps rely on the construction of certain 
		auxiliary polynomials; such a construction has 
		played a fundamental role in 
		diophantine approximation, transcendental 
		number theory, and combinatorics. 
		For examples in number theory, 
		the readers are referred to \cite{BG06_HI,Mas16_AP} 
		and the references therein.	In combinatorics, the 
		method of constructing polynomials vanishing at 
		certain points has recently been called 
		the \emph{Polynomial Method} and is the subject 
		of the book \cite{Gut16_PM}. This method has 
		produced 
		surprisingly short and elegant solutions of 
		certain combinatorial problems over finite fields
		\cite{Dvi09_OT,CLP17_PF,EG17_OL}.
		
		\textbf{Acknowledgments.} We wish to thank Professor Umberto Zannier for useful discussions. J.~B is partially supported by an NSERC Discovery Grant. K.~N. is partially supported by an NSERC Discovery Grant, a start-up grant at UCalgary, and a CRC tier-2 research stipend.
	
	\section{A nontrivial algebraic relation}
	We start with the following simple lemma:
	\begin{lemma}\label{lem:for algebraic relation}
	Let $g:\ \cA\rightarrow\cA$ and assume that there exists $C_1\in\N_0$ such that the following 3 properties hold:
	\begin{itemize}
		\item [(a)] $g(A+BP)\equiv g(A) \bmod P$ for every $A,B\in\cA$ and $P\in\cP$.
		\item [(b)] $\deg(g(A))\leq q^{\deg(A)}-1$ for
		every $A\in\cA$ with $\deg(A)>C_1$.
		\item [(c)] $g(A)=0$ for every $A\in\cA_{\leq C_1}$. 
	\end{itemize}	 
	 Then $g$ is identically $0$.
	\end{lemma}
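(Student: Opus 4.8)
The plan is to show, by induction on $n \geq C_1$, that $g$ vanishes identically on $\cA_{\leq n}$; the base case $n = C_1$ is exactly hypothesis~(c), and since $\bigcup_{n} \cA_{\leq n} = \cA$ this yields $g \equiv 0$. For the inductive step, suppose $n > C_1$ and that $g$ vanishes on $\cA_{\leq n-1}$, and fix an arbitrary $A \in \cA_n$.

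The heart of the argument is the claim that $P \mid g(A)$ for every monic irreducible $P$ with $\deg(P) \leq n$. To see this, use the division algorithm to write $A = BP + A'$ with $\deg(A') < \deg(P) \leq n$, so that $A' \in \cA_{\leq n-1}$ and hence $g(A') = 0$ by the inductive hypothesis; then property~(a) gives $g(A) = g(A' + BP) \equiv g(A') = 0 \pmod{P}$.

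Because distinct monic irreducibles are coprime in the unique factorization domain $\cA$, it follows that $\prod_{P \in \cP_{\leq n}^{+}} P$ divides $g(A)$. Now $\deg(A) = n > C_1$, so hypothesis~(b) gives $\deg(g(A)) \leq q^n - 1$, whereas \eqref{eq:q^n to 2q^n} gives $\deg\bigl(\prod_{P \in \cP_{\leq n}^{+}} P\bigr) \geq q^n$. A nonzero multiple of $\prod_{P \in \cP_{\leq n}^{+}} P$ therefore has degree at least $q^n > q^n - 1$, forcing $g(A) = 0$ and completing the induction.

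I do not expect a genuine obstacle here. The only point that needs attention is that the remainder $A'$ always lands in the range controlled by the inductive hypothesis, which is immediate from $\deg(A') < \deg(P) \leq n$; and the inequality $q^n \leq \deg\bigl(\prod_{P \in \cP_{\leq n}^{+}} P\bigr)$ supplied by \eqref{eq:q^n to 2q^n} is precisely what is needed to override the upper bound in~(b).
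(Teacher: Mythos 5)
Your proof is correct and is essentially the paper's argument: the paper phrases it as a minimal-degree counterexample rather than an explicit induction, but the two key steps — using (a) and division with remainder to show every monic irreducible of degree at most $n$ divides $g(A)$, and then comparing $\deg(g(A))\leq q^n-1$ against the bound $\deg\bigl(\prod_{P\in\cP_{\leq n}^{+}}P\bigr)\geq q^n$ from \eqref{eq:q^n to 2q^n} — are identical.
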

	\begin{proof}
	Otherwise, assume there is $A\in\cA$ of smallest degree
	such that $g(A)\neq 0$. We have $D:=\deg(A)>C_1$. Since $g(B)=0$ for
	every $B\in \cA_{<D}$ and since for every monic irreducible polynomial $P$ of degree at most $D$ there is some $C$ such that $A-CP$ has degree strictly less than $D$, we have
	$$g(A)\equiv 0 \bmod \prod_{P\in \cP_{\leq D}^+}P.$$
	Since $\deg\left(\displaystyle\prod_{P\in \cP_{\leq D}^+}P\right)\geq q^D$
	and $\deg(g(A))<q^D$, we must have $g(A)=0$, a contradiction.
	\end{proof}
		
	\begin{proposition}\label{prop:algebraic relation}
	Let $f:\ \cA\rightarrow\cA$ be as 
	in Theorem~\ref{thm:new main}. 
	Then there exists a non-zero polynomial $Q(X,Y)\in\cA[X,Y]$ such that 
	$Q(A,f(A))=0$ for every $A\in\cA$.
	\end{proposition}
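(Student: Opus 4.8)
The plan is to build, by the auxiliary-polynomial method, a nonzero $Q(X,Y)\in\cA[X,Y]$ vanishing at every point $(A,f(A))$ with $A\in\cA_{\leq C_1}$ for a suitably large $C_1$, and then to feed $g(A):=Q(A,f(A))$ into Lemma~\ref{lem:for algebraic relation} to upgrade this to $Q(A,f(A))=0$ for \emph{all} $A\in\cA$. Note that the hypothesis $Q\in\cA[X,Y]$ makes condition (a) of Lemma~\ref{lem:for algebraic relation} automatic: since $A+BP\equiv A$ and, by (P3), $f(A+BP)\equiv f(A)\bmod P$, and the coefficients of $Q$ lie in $\cA$, one has $g(A+BP)\equiv g(A)\bmod P$.

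\emph{The linear algebra.} Fix nonnegative integers $a,b,c$ and $C_1$ and look for $Q=\sum_{i\leq a}\sum_{j\leq b}\sum_{k\leq c}u_{ijk}\,t^{k}X^{i}Y^{j}$ with the $u_{ijk}\in\bF$ not all zero and $Q(A,f(A))=0$ in $\bF[t]$ for every $A\in\cA_{\leq C_1}$. Since $\deg Q(A,f(A))\leq c+a\deg(A)+b\deg(f(A))$, each vanishing imposes at most $1+c+a\deg(A)+b\deg(f(A))$ homogeneous $\bF$-linear conditions on the $(a+1)(b+1)(c+1)$ unknowns $u_{ijk}$. Summing over the $q^{C_1+1}$ polynomials of $\cA_{\leq C_1}$, using $\sum_{A\in\cA_{\leq C_1}}\deg(A)=(1+o(1))\,C_1q^{C_1+1}$ and, via \eqref{eq:1/27q} and a geometric-series estimate, $\sum_{A\in\cA_{\leq C_1}}\deg(f(A))<q^{2C_1}/(27C_1)$ once $C_1$ is large, we see that a nonzero such $Q$ exists whenever
\[
(a+1)(b+1)(c+1)\;>\;(1+o(1))\left(c\,q^{C_1+1}+a\,C_1q^{C_1+1}+\frac{b\,q^{2C_1}}{27C_1}\right).
\]

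\emph{The choice of degrees.} We must satisfy the inequality above while also arranging $\deg(g(A))<q^{\deg(A)}$ for every $A$ with $\deg(A)>C_1$ (needed for Lemma~\ref{lem:for algebraic relation}(b)); since $c/q^{n}$, $an/q^{n}$ and $b/(27qn)$ are decreasing in $n>C_1$, the worst case is $n=C_1+1$, where \eqref{eq:1/27q} turns this into $c+a(C_1+1)+\frac{b\,q^{C_1+1}}{27q(C_1+1)}<q^{C_1+1}$. Taking $c\approx\alpha q^{C_1+1}$, $a\approx\beta q^{C_1+1}/C_1$, $b\approx\gamma qC_1$ with fixed reals $\alpha,\beta,\gamma>0$, the first requirement becomes (dividing by $q^{2C_1+2}$) $q\,\alpha\beta\gamma>\alpha+\beta+\gamma/(27q)$ and the second becomes $\alpha+\beta+\gamma/27<1$. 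Both hold for every $q\geq2$ with, say, $\alpha=\beta=0.3$ and $\gamma=8.1$, and it is exactly the factor $27q$ in \eqref{eq:1/27q} that leaves this much room. So fix such $\alpha,\beta,\gamma$, then $C_1$ large (also above the threshold in \eqref{eq:1/27q}), then $a,b,c$, and finally a nonzero $Q$ as above.

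With this $Q$ in hand, Lemma~\ref{lem:for algebraic relation} applies to $g(A)=Q(A,f(A))$: (a) holds by the first paragraph, (c) holds since $Q$ vanishes on $\{(A,f(A)):A\in\cA_{\leq C_1}\}$, and (b) holds by the estimate in the third paragraph. Hence $g\equiv0$, i.e.\ $Q(A,f(A))=0$ for every $A\in\cA$. The only genuine difficulty is the bookkeeping in the choice of degrees: increasing the $X$-degree or $t$-degree of $Q$ both inflates the number of linear conditions (through the low-degree $A$) and threatens hypothesis (b) (through the high-degree $A$), so the three degree bounds have to be balanced with some care, and this is where the constant in \eqref{eq:1/27q} is used.
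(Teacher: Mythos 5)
Your proposal is correct and follows essentially the same route as the paper: construct a nonzero auxiliary polynomial $Q\in\cA[X,Y]$ with bounded $t$-, $X$-, and $Y$-degrees by linear algebra so that $g(A)=Q(A,f(A))$ vanishes on $\cA_{\leq C_1}$, verify the growth bound $\deg(g(A))<q^{\deg(A)}$ for $\deg(A)>C_1$ using \eqref{eq:1/27q}, and invoke Lemma~\ref{lem:for algebraic relation}. The only differences are cosmetic: you count the linear conditions by summing the individual degree bounds rather than using the paper's uniform bound $q^M$ per value, and your explicit degree parameters differ by constants from the paper's $q^M/3$, $q^M/(3M)$, $9qM$, but the balancing argument is the same.
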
	
	\begin{proof}
	Let $N\in\N$ such that $\deg(f(A))<\displaystyle\frac{q^{\deg(A)}}{27q\deg(A)}$ for every $A\in\cA$ with $\deg(A)\geq N$. 
	Let $M\geq N$
	be a large positive integers that will be specified later.
	Consider $Q(X,Y)\in\cA[X,Y]$ of the form:
	$$Q(X,Y)=\sum_{0\leq i\leq q^M/3}\sum_{0\leq j\leq q^M/(3M)}\sum_{0\leq k\leq 9qM} c_{ijk}t^{i}X^jY^k$$
	where $c_{ijk}\in\F_q$. The number of unknowns 
	$c_{ijk}$ is greater than $q^{2M+1}$.
	
	Put $g(A)=Q(A,f(A))$ for $A\in\cA$ then $g$ satisfies
	the congruence condition:
	\begin{equation}\label{eq:congruence for g with the Q and cijk}
	g(A+BP)\equiv g(A) \bmod P\ \text{for every 
	$A,B\in\cA$ and $P\in\cP$}.
	\end{equation}
	
	We prove that with a sufficiently large choice of $M$, 
	we have
	$\deg(g(A))<q^M$
	for every $A\in \cA$ with $\deg(A)\leq M$. Suppose
	$\deg(A)\in [N,M]$ then we have:
	$$\deg(g(A))<\frac{q^M}{3}+\frac{q^M\deg(A)}{3M}+\frac{9qMq^{\deg(A)}}{27q\deg(A)}\leq q^M$$
	since the function $q^x/x$ is increasing on $[2,\infty)$.
	Now let $C_2$ be a positive number that is at least the maximum of $\deg(f(A))$ for $A\in \cA_{<N}$. Hence for every
	$A\in\cA_{<N}$, we have
	$$\deg(g(A))\leq \frac{q^M}{3}+\frac{Nq^M}{3M}+9C_2qM<q^M$$
	when $M$ is sufficiently large.

	Note that $\vert\cA_{\leq M}\vert=q^{M+1}$.
	Therefore the condition $g(A)=0$ for every $A$ with $\deg(A)\leq M$ is equivalent to the condition that the $c_{ijk}$'s satisfy
	a linear system of at most $q^{2M+1}$
	equations. Since the number of unknowns $c_{ijk}$ is greater than the number of equations, there exist $c_{ijk}$ not all zero 
	such that $g(A)=0$ for every $A\in \cA$
	with $\deg(A)<M$. 
	
	Finally, if $A\in\cA$ with $D:=\deg(A)>M$, we have
	$$\deg(g(A))\leq \frac{q^M}{3}+\frac{Dq^M}{3M}+\frac{Mq^D}{3D}<q^D$$
	since the function $q^x/x$ is increasing on $[M,\infty)$. Therefore the map
	$g:\ \cA\rightarrow\cA$ satisfies all the conditions of Lemma~\ref{lem:for algebraic relation} with $C_1=M$, we have that $g(A)=0$ for every $A\in\cA$ and this finishes the proof.
		\end{proof}
		
	\begin{corollary}\label{cor:linear bound}
	Let $f:\ \cA\rightarrow\cA$ be as in Theorem~\ref{thm:new main}. Then there exist $C_3,C_4>0$ depending only on $q$ and $f$ such that
	$$\deg(f(A))\leq C_3\deg(A)+C_4\ \text{for every $A\in\cA\setminus\{0\}$}.$$
	\end{corollary}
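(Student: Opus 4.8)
The plan is to extract the linear bound directly from the nontrivial algebraic relation provided by Proposition~\ref{prop:algebraic relation}; once that relation is in hand, Corollary~\ref{cor:linear bound} is essentially a formal consequence. I would begin by fixing a nonzero $Q(X,Y)\in\cA[X,Y]$ with $Q(A,f(A))=0$ for every $A\in\cA$, and writing it as a polynomial in $Y$,
$$Q(X,Y)=\sum_{k=0}^{d}q_k(X)\,Y^{k},\qquad q_k\in\cA[X],\quad q_d\neq 0.$$
Let $b$ be the degree of $Q$ in the variable $X$, and let $a$ be the largest $t$-degree occurring among the coefficients of $Q$ (regarded as a polynomial in $X$ and $Y$ over $\cA=\bF[t]$); then for every $A\in\cA\setminus\{0\}$ one has the crude estimate $\deg\bigl(q_k(A)\bigr)\le a+b\deg(A)$ for all $k$.

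Next I would dispose of the degenerate case $d=0$: then $q_0(A)=0$ for all $A\in\cA$, but $\cA$ is infinite whereas a nonzero element of $\cK[X]$ has only finitely many roots in $\cK\supseteq\cA$, so $q_0=0$, contradicting $Q\neq 0$; hence $d\ge 1$. Viewing $q_d$ as a nonzero element of $\cK[X]$, the set $S=\{A\in\cA:\ q_d(A)=0\}$ is then finite. For every $A\in\cA\setminus S$ the element $f(A)\in\cA\subseteq\cK$ is a root of the degree-$d$ polynomial $\sum_{k=0}^{d}q_k(A)\,Y^{k}$ over the field $\cK$, and the elementary bound on the size of a root with respect to the non-archimedean degree valuation (if $\sum_{k=0}^{d}b_kY^k$ with $b_k\in\cA$ and $b_d\neq 0$ has a root $y$, then $\deg(y)\le\max_{0\le k\le d}\deg(b_k)$) gives $\deg(f(A))\le\max_{0\le k\le d}\deg\bigl(q_k(A)\bigr)\le a+b\deg(A)$.

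Finally I would handle the finitely many $A\in S\setminus\{0\}$ trivially: since $S$ is finite, $C_5:=\max\{\deg(f(A)):\ A\in S\setminus\{0\}\}$ is a well-defined finite number (set $C_5=0$ if this set is empty). Taking $C_3=\max(b,1)$ and $C_4=\max(a,C_5,1)$ --- all determined by $q$ and $f$ via the chosen relation $Q$ --- then yields $\deg(f(A))\le C_3\deg(A)+C_4$ for every $A\in\cA\setminus\{0\}$. I do not expect any serious obstacle here; the only point requiring a moment's care is that the ``leading coefficient'' $q_d(A)$ may vanish for some $A$, which is why one isolates the finite exceptional set $S$ and absorbs it into the additive constant $C_4$.
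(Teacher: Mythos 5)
Your proof is correct and follows essentially the same route as the paper: both derive the linear bound from the algebraic relation of Proposition~\ref{prop:algebraic relation} by an elementary degree (non-archimedean valuation) estimate on the roots of $\sum_k q_k(A)Y^k$. In fact you are slightly more careful than the paper's own proof, which does not explicitly address the finitely many $A$ with vanishing leading coefficient $q_d(A)=0$ --- a point you correctly isolate and absorb into the additive constant.
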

	\begin{proof}
	     By Proposition~\ref{prop:algebraic relation}, there exist
	     $n\geq 0$ and polynomials $P_0(X),\ldots,P_n(X)\in \cA[X]$
	     with $P_n\neq 0$ such that:
	     $$P_n(A)f(A)^n+P_{n-1}(A)f(A)^{n-1}+\ldots+P_0(A)=0$$
	     for every $A\in R$. We must have $n>0$ since otherwise $P_0(A)=0$ for every $A$ would force $P_0=0$ as well. Let $C_3=\max_{0\leq i\leq n}\deg(P_i)$ and let
	     $C_4$ be the maximum of the degrees of the coefficients of the $P_i$'s so
	     that $\deg(P_i(A))\leq C_3\deg(A)+C_4$ for every $A\in\cA\setminus\{0\}$.
	     If $\deg(f(A))>C_3\deg(A)+C_4$ then $\deg(P_n(A)f(A)^n)$ is greater
	     than $\deg(P_{n-1}(A)f(A)^{n-1}+\ldots+P_0(A))$, contradiction. 
	 \end{proof}	
		
	\section{A result under a linear bound}	 
		In this section, we consider a related result in which the inequality \eqref{eq:1/27q} is replaced by a much stronger linear bound on $\deg(f(A_n))$
		where $(A_n)_{n\geq 0}$ is a special sequence in $\cA$. Moreover, the next theorem together with Corollary~\ref{cor:linear bound} yield Theorem~\ref{thm:new main}.
		 
		\begin{theorem}\label{thm:linear bound}
		Let $f:\ \cA\rightarrow \cA$ satisfy the congruence condition 
		$$f(A+BP)\equiv f(A) \bmod P\ \text{for every $A,B\in\cA$ and $P\in\cP$}.$$ 
		Assume there exist $U\in \cA$ with $U'\neq 0$ (i.e. $U$ is not the $p$-th power of an element of $\bar{\F}[t]$) and positive integers
		$C_5$ and $C_6$ such that $\deg(f(U^n))\leq C_5n+C_6$
		for every $n\in\N_0$. Then $f$ is a polynomial map. 
		\end{theorem}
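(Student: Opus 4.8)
The plan is to base everything on the following elementary principle, applied twice. \emph{Principle: if $h:\cA\to\cA$ satisfies the congruence condition $h(A+BP)\equiv h(A)\bmod P$ for all $A,B\in\cA,\ P\in\cP$ and $h(U^n)=0$ for every $n\in\N_0$, then $h\equiv 0$.} To see this, fix $A\in\cA\setminus\{0\}$ with $A\notin\{U^n:n\ge0\}$. By the Mason--Stothers $abc$-theorem over $\bF[t]$ — this is one place where the hypothesis $U'\neq 0$ is used — the nonzero polynomials $U^n-A$ ($n\ge 0$) have infinitely many pairwise non-associate irreducible factors. For any irreducible $P\mid U^n-A$ one has $A\equiv U^n\bmod P$, hence $h(A)\equiv h(U^n)=0\bmod P$; as $h(A)\in\cA$ is then divisible by infinitely many primes, $h(A)=0$. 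Together with $h(U^n)=0$ this gives $h(A)=0$ for every $A\neq 0$, and finally $h(0)\equiv h(P)=0\bmod P$ for every $P\in\cP$ (apply the congruence with $A=0$, $B=1$), so $h(0)=0$.

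The main step is to produce a non-zero $Q(X,Y)\in\cA[X,Y]$ with $Q(U^n,f(U^n))=0$ for all $n$; the key is that one need only impose this for finitely many $n$. Put $g_Q(A):=Q(A,f(A))$; since $Q$ has coefficients in $\cA$, $g_Q$ again satisfies the congruence condition, so if $g_Q(U^m)=0$ for all $m<n$ then $g_Q(U^n)\equiv 0\bmod P$ for every irreducible $P$ dividing some $U^n-U^m$ with $m<n$. For $p\nmid e$ the polynomial $U^e-1$ is separable of degree $e\deg(U)$ (using $U'\neq0$), and $P\mid U^n-U^m$ for some $m<n$ exactly when $\ord_P(U)\le n$; a short count shows that the product $D_n$ of all such primes has $\deg(D_n)\ge c_0n^2$ for all large $n$, where $c_0=c_0(q,U)>0$. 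On the other hand $\deg(g_Q(U^n))\le\deg_tQ+\deg_XQ\cdot n\deg(U)+\deg_YQ\cdot(C_5n+C_6)$, which is only linear in $n$ — here the hypothesis $\deg(f(U^n))\le C_5n+C_6$ is essential. Hence there is a threshold $n_0$, depending only on the degrees of $Q$, such that $g_Q(U^m)=0$ for all $m\le n_0$ already forces $g_Q(U^n)=0$ for every $n$, by induction on $n$. Choosing $\deg_tQ$ of size $\sim N^2$ and $\deg_XQ,\deg_YQ$ of size $\sim N$, one has $n_0=O(N)$, the number of $\F_q$-linear conditions imposed by the $n_0+1$ identities $g_Q(U^n)=0$ (each an identity between polynomials in $t$ of degree $O(N^2)$) is $O(N^3)$, while $Q$ has $\sim N^4$ coefficients; so for $N$ large such a $Q$ exists.

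By the Principle applied to $h=g_Q$, we now get $Q(A,f(A))=0$ for \emph{every} $A\in\cA$; as in the proof of Corollary~\ref{cor:linear bound} this yields $\deg(f(A))\le C_3\deg(A)+C_4$ for all $A$. It remains to turn the algebraic relation into an honest polynomial: passing to an irreducible factor $\tilde Q\in\cK[X,Y]$ of $Q$ that contains infinitely many of the points $(U^n,f(U^n))$, one has $\deg_Y\tilde Q\ge1$, and a geometric argument — using once more the congruence condition, together with the fact that $f(U^n)\in\cA$ has degree linear in $\deg(U^n)$, to exclude any fractional branch of $\tilde Q=0$ at $X=\infty$ — shows that $\tilde Q$ contains the graph of some $R\in\cK[X]$, so that $f(U^n)=R(U^n)$ for all $n$. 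Finally pick $E\in\cA\setminus\{0\}$ with $ER\in\cA[X]$; then $h(A):=E\,f(A)-(ER)(A)$ maps $\cA$ to $\cA$, satisfies the congruence condition (both $f$ and the genuine polynomial $ER$ do), and vanishes on $(U^n)$, so $h\equiv0$ by the Principle, i.e.\ $f=R$ is a polynomial map. The heart of the proof — and the part I expect to be the main obstacle — is the construction of $Q$ in the second paragraph, namely making the three degrees of $Q$ balance the quadratic lower bound for $\deg(D_n)$ against the linear upper bound for $\deg(g_Q(U^n))$ so that the final count $N^4>O(N^3)$ works; the geometric extraction of $R$ is the other delicate point, while the Principle itself is soft.
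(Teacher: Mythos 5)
Your overall architecture differs from the paper's in an instructive way, but it contains one genuine gap. The paper's auxiliary function is \emph{linear} in $f$, namely $g(A)=P(A)f(A)+Q(A)$, precisely so that once $g\equiv 0$ the conclusion $f=-Q/P\in\cK[X]$ is immediate. The price it pays is that the dimension count becomes a constant-factor race: with $\deg_Y=1$ the number of free coefficients and the number of imposed conditions are both of order $N^3$, and the paper only wins by using the sharpened lower bound of Lemma~\ref{lem:product of t^i-1}(b), whose constant $1-\tfrac1p+\tfrac1{p^2}-\tfrac1{p^3}>\tfrac12$ is played off against the choice $N+1=\tfrac{2-\epsilon}{\delta}D_1$. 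You instead allow $\deg_Y Q\sim N$, which makes the count comfortable ($\sim N^4$ unknowns versus $O(N^3)$ conditions, using only the soft quadratic bound corresponding to Lemma~\ref{lem:product of t^i-1}(a)); your induction ``vanishing at all $m<n$ forces vanishing at $n$'' is correct and is a clean variant of the paper's argument. But the cost is that you end up with only an algebraic relation $\tilde Q(U^n,f(U^n))=0$ with $\deg_Y\tilde Q$ possibly large, and the step you call ``a geometric argument \dots to exclude any fractional branch of $\tilde Q=0$ at $X=\infty$'' is asserted, not proved. This is not a routine step: it amounts to showing that a (geometrically irreducible) plane curve over $\cK$ carrying infinitely many points of the form $(U^n,f(U^n))\in\cA\times\cA$ with $\deg f(U^n)=O(n)$ must contain the graph of a polynomial. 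Newton--Puiseux fails in characteristic $p$, and the relevant Siegel-type statements about integral points on curves over function fields of characteristic $p$ have genuine exceptions (isotrivial cases). So the heart of the difficulty has been moved into a black box rather than resolved; one cannot simply revert to $\deg_Y Q=1$ inside your counting scheme either, because then the crude bound $\deg D_n\geq c_0n^2$ no longer wins the race and you would need the paper's refined estimates anyway.

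A secondary, fixable issue: in your ``Principle'' you invoke Mason--Stothers to claim that the polynomials $U^n-A$ have infinitely many non-associate irreducible factors in total. In characteristic $p$ the $abc$-inequality gives nothing when, after removing the gcd, all three terms $U^n$, $A-U^n$, $A$ have vanishing derivative, and $U'\neq 0$ does not by itself preclude this for a given $n$ (e.g.\ $p\mid n$ and $A$ a $p$-th power). You must first restrict to the infinite set of $n$ for which $U^n/A$ is not a $p$-th power in $\cK$ --- this is exactly the set $\scrS$ in the paper's proof, and it is the real place where $U'\neq 0$ enters --- and only then apply the $abc$/unit-equation argument. With that restriction your Principle is correct and is essentially the paper's final two paragraphs (the paper routes it through Proposition~\ref{prop:S-unit characteristic p} rather than Mason--Stothers, but these are the same derivative-based tool). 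The construction of $Q$ and the Principle are therefore sound in outline; the unproved extraction of $R$ from $\tilde Q$ is the gap that must be filled before this counts as a proof.
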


		For every non-constant $A\in\cA$, let $\rad(A)$ denote the product of the distinct monic irreducible factors of $A$.
		For integers $0\leq m<n$ and non-constant $U\in\cA$, let $\Delta_{m,n,U}=(U^n-1)(U^{n-1}-1)\ldots (U^{n-m}-1)$
		and let 
		$d_{m,n,U}=\deg(\rad(\Delta_{m,n,U}))$. We start with the following:
		
		\begin{lemma}\label{lem:product of t^i-1}
		Let $U(t)\in\cA$ such that $U'\neq 0$. Write $\delta=\deg(U)$. 
		\begin{itemize}
			\item [(a)] Let $M,\epsilon>0$. There exists a positive constant $C_7(\epsilon,M,p,U)$ depending only on $\epsilon$, $M$, $p$, and $U$ such that for every $n\geq 1$:
			$$\displaystyle d_{m,n,U}\geq \delta Mn^{2-\epsilon} - C_7(\epsilon,M,p).$$

			\item [(b)] Let $0\leq m<n$ be integers. There exist positive constants $C_8(p,U)$ depending only on $p$ and $U$ and	$C_9(m,p,U)$ depending only on $m$, $p$, and $U$ such that: 
			$$ d_{m,n,U}\geq \delta\left(1-\frac{1}{p}+\frac{1}{p^2}-\frac{1}{p^3}\right)mn-C_8(p,U)n-C_9(m,p,U).$$ 		
		\end{itemize}
		\end{lemma}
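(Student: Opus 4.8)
The plan is to evaluate $d_{m,n,U}$ as a root count over $\overline{\F}$ and to absorb the only source of loss --- the critical values of $U$ --- into a single, $n$-independent correction term; write $\delta := \deg U$. First I would observe that $t_0 \in \overline{\F}$ is a root of $\Delta_{m,n,U}$ exactly when $U(t_0)$ is a nonzero root of unity whose order divides some integer $j$ with $n - m \le j \le n$, i.e.\ when $U(t_0)$ lies in the set $S$ of roots of unity whose order belongs to the divisor-closed set $D := \{\, d \ge 1 : d \mid j \text{ for some } n-m \le j \le n \,\}$. Since $\overline{\F}^{\times}$ has exactly $\phi(d)$ elements of order $d$ when $p \nmid d$ and none otherwise, $|S| = \sum_{d \in D,\ p \nmid d}\phi(d)$, and hence $d_{m,n,U} = \#\{\, t_0 \in \overline{\F} : U(t_0) \in S \,\} = \sum_{\zeta \in S}\#\,U^{-1}(\zeta)$.

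Next I would bound each fibre below. For any $\zeta$ the polynomial $U - \zeta$ has degree $\delta$, so $\#\,U^{-1}(\zeta) = \delta - e(\zeta)$ where $e(\zeta) := \sum_{t_0:\, U(t_0) = \zeta}(\operatorname{mult}_{t_0}(U - \zeta) - 1) \ge 0$. If $t_0$ is a multiple root of $U - \zeta$ and $\mu := \operatorname{mult}_{t_0}(U - U(t_0))$, then writing $U - U(t_0) = (t - t_0)^{\mu}h$ with $h(t_0) \ne 0$ gives $U' = (t - t_0)^{\mu - 1}(\mu h + (t - t_0)h')$, so $\ord_{t_0}(U') \ge \mu - 1$ in every characteristic. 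Summing over all points (a finite sum, supported on the roots of $U'$), $\sum_{\zeta \in S}e(\zeta) \le \sum_{t_0 \in \overline{\F}}(\operatorname{mult}_{t_0}(U - U(t_0)) - 1) \le \sum_{t_0}\ord_{t_0}(U') = \deg(U') \le \delta - 1$, using $U' \ne 0$. This yields the master inequality
\begin{equation}\label{eq:rootcount-plan}
d_{m,n,U} \;\ge\; \delta \sum_{d \in D,\ p \nmid d}\phi(d) \;-\; (\delta - 1),
\end{equation}
after which $U$ enters only through $\delta$, and everything reduces to lower bounds for the arithmetic sum on the right.

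For (a) I would use that $D \supseteq \{1, 2, \dots, m+1\}$ (each $d \le m+1$ divides one of the $m+1$ consecutive integers $n-m, \dots, n$), so that for $m$ comparable to $n$ --- in particular for $m = n-1$, when $D \supseteq \{1, \dots, n\}$ --- the elementary bound $\sum_{d \le x,\ p \nmid d}\phi(d) \gg_{p} x^{2}$ (from $\sum_{d\le x}\phi(d) = \tfrac{3}{\pi^{2}}x^{2} + O(x\log x)$ by peeling off the multiples of $p$) makes the right side of \eqref{eq:rootcount-plan} at least $c_{p}\,\delta\, n^{2} - O_{p,U}(n\log n)$ for some $c_p > 0$; this eventually dominates $\delta M n^{2-\epsilon}$, and enlarging $C_{7}$ to cover the finitely many small $n$ finishes (a). For (b) I would note that $\sum_{d \mid j,\ p \nmid d}\phi(d)$ is the prime-to-$p$ part $j^{*} := j/p^{v_p(j)}$ of $j$; with $c_d := \#\{\, j \in [n-m,n] : d \mid j \,\}$ one has $\sum_{d \in D,\ p \nmid d}\phi(d)\,c_d = \sum_{j=n-m}^{n} j^{*}$, and since $c_d \ge 1$ on $D$ while $c_d \ge 2$ forces $d \le m$ (whence $\phi(d)(c_d-1) \le m$), it follows that $\sum_{d\in D,\ p \nmid d}\phi(d) \ge \sum_{j=n-m}^{n} j^{*} - m^{2}$. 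Finally $\sum_{j=n-m}^{n} j^{*} = \sum_{a\ge0}p^{-a}\sum_{j:\, p^{a}\|j}j$; retaining only $a = 0, 1$ (the remaining terms being nonnegative) and evaluating the sums of the multiples of $1, p, p^{2}$ in $[n-m,n]$ up to an $O_{p}(n)$ error gives $\sum_{j=n-m}^{n} j^{*} \ge (1 - \tfrac1p + \tfrac1{p^{2}} - \tfrac1{p^{3}})mn - C(p)n - C'(m,p)$. Plugging this into \eqref{eq:rootcount-plan} and collecting the $\delta$-times-$m$-free errors into $C_{8}(p,U)n$ and the $\delta$-times-$m$-dependent errors (including $\delta m^{2}$ and $\delta - 1$) into $C_{9}(m,p,U)$ gives the stated bound in (b).

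The step I expect to be the real obstacle is the master inequality \eqref{eq:rootcount-plan}: one must control the defect from the inseparability of the fibres $U - \zeta$ in characteristic $p$ uniformly in $n$, and this is exactly where the hypothesis $U' \ne 0$ is essential --- if $U$ were a $p$-th power each fibre would contain at most $\delta/p$ points and all the bounds would collapse. The verification that $\ord_{t_0}(U') \ge \mu - 1$ even when $p \mid \mu$, and the bookkeeping of the error terms in (b), are the places where a little care is needed; the arithmetic of $\sum \phi$ and $\sum j^{*}$ over an interval is otherwise routine.
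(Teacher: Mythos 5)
Your proof is correct and follows essentially the same strategy as the paper's: identify $d_{m,n,U}$ with the number of points of $\bar{\bF}$ that $U$ maps into the relevant set of roots of unity, use $U'\neq 0$ to show that almost every fibre of $U$ has exactly $\delta$ points, and then count the roots of unity. (Like the paper, you read the unquantified $m$ in part (a) as $m=n-1$, which is what is intended.) Two places where your bookkeeping genuinely differs, both correctly: first, you bound the total fibre defect explicitly by $\deg(U')\leq\delta-1$ via the observation $\ord_{t_0}(U')\geq \operatorname{mult}_{t_0}(U-U(t_0))-1$, whereas the paper just absorbs the finitely many critical values of $U$ into an $O_U(1)$; second, in (b) you evaluate $\sum_{d\in D,\ p\nmid d}\varphi(d)$ through the identity $\sum_{d\mid j,\ p\nmid d}\varphi(d)=j/p^{v_p(j)}$ plus a correction of size at most $m^2$ for those $d$ dividing more than one $j\in[n-m,n]$, whereas the paper runs inclusion--exclusion on the sets $A_i=\{\zeta:\zeta^{n-i}=1\}$ and bounds $\vert A_i\cap A_j\vert\leq m$. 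The two computations are equivalent: truncating your series for $j^{*}$ at $a=1$ is exactly the paper's restriction to indices with $p^2\nmid n-i$, and both yield the main term $\bigl(1-\frac{1}{p}+\frac{1}{p^2}-\frac{1}{p^3}\bigr)mn$ with errors $O_{p,U}(1)n+O_{m,p,U}(1)$; your fibre bound is marginally sharper but this is immaterial for the application.
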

		\begin{proof}
		Since $U'\neq 0$, it has only finitely many roots. 
		For $\alpha\in\bar{\bF}$ that is not the value
		of $U$ at any of those roots, we have $\vert U^{-1}(\alpha)\vert=\delta$.
		
		For part (a), $d_{n-1,n,U}$ is at least the number of the preimages under $U$ of the roots of unity (in $\bar{\bF}^*$) whose order is at most $n$. For each $\ell$ with
		$p\nmid\ell$, there are exactly $\varphi(\ell)$ roots of unity of order $\ell$. Since $\varphi(\ell)$ dominates $\ell^{1-\epsilon}$, this proves part (a).
		
		For part (b), $d_{m,n,U}$ is at least the number of the preimages under $U$ of the roots of unity whose
		order divides $n-i$ for some $0\leq i\leq m$. Define:
		$$T=\{0\leq i\leq m:\ n-i\not\equiv 0\bmod p^2\}$$
		$$A_i=\{\zeta\in\bar{\bF}^*:\ \zeta^{n-i}=1\}\ \text{for each $i\in T$}.$$
		We have:
		$$d_{m,n,U}\geq \delta\vert \bigcup_{i\in T} A_i\vert+O_U(1)\geq \delta\left(\sum_{i\in T}\vert A_i\vert -\sum_{i,j\in T, i<j} \vert A_i\cap A_j\vert\right)+O_U(1).$$
		 
		Note that $\vert A_i\vert=\displaystyle\frac{n-i}{p^k}$ where $p^k\parallel n-i$. Let:
		$$S_0=\sum_{0\leq i\leq m}(n-i)=\frac{(2n-m)(m+1)}{2},$$
		$$S_1=\sum_{0\leq i\leq m, p\mid n-i} (n-i)=p\frac{(\lfloor n/p\rfloor+\lceil (n-m)/p\rceil)(\lfloor n/p\rfloor-\lceil (n-m)/p\rceil+1)}{2},$$
		\begin{align*}
		S_2&=\sum_{0\leq i\leq m, p^2\mid n-i} (n-i)\\
		&=p^2\frac{(\lfloor n/p^2\rfloor+\lceil (n-m)/p^2\rceil)(\lfloor n/p^2\rfloor-\lceil (n-m)/p^2\rceil+1)}{2}.
		\end{align*}
		
		We have:
		$$\sum_{i\in T}\vert A_i\vert=S_0-S_1+\frac{1}{p}(S_1-S_2)=\left(1-\frac{1}{p}+\frac{1}{p^2}-\frac{1}{p^3}\right)mn+O_{p}(1)n+O_{m,p}(1).$$
		For $i<j$ in $T$, we have $A_i\cap A_j\subseteq\{\zeta:\ \zeta^{j-i}=1\}$ hence $\vert A_i\cap A_j\vert\leq m$. Overall, we have
		$$d_{m,n,U}\geq \delta\left(1-\frac{1}{p}+\frac{1}{p^2}-\frac{1}{p^3}\right)mn+O_{p,U}(1)n+O_{m,p,U}(1)$$
		and this finishes the proof.
		\end{proof}
		
		We will need the following result on $S$-unit equations over characteristic $p$:
		\begin{proposition}\label{prop:S-unit characteristic p}
		Let $\Gamma\subset\cK^{*}$ be a finitely generated subgroup of rank $r$ 
		and consider the 
		equation $x+y=1$ with $(x,y)\in \Gamma\times\Gamma$. Then there 
		exists a finite subset $\scrX$
		of $\cK^*\times \cK^*$ of cardinality at most $p^{2r}-1$ 
		such that every solution $(x,y)\in (\Gamma\times\Gamma)\setminus (\bar{\bF}\times\bar{\bF})$ has the form
		$x=x_0^{p^k}$ and $y=y_0^{p^k}$
		for some $(x_0,y_0)\in \scrX$ and $k\in \N_0$.
		\end{proposition}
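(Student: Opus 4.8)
The plan is to establish this by a \emph{Frobenius descent}: one reduces an arbitrary solution to one on which the derivation $d/dt$ of $\cK$ does not vanish, and then counts such ``reduced'' solutions modulo $p$-th powers. First, one discards the constant solutions: if $x$ is constant then $y=1-x$ is constant too, so it suffices to treat solutions with $x,y$ non-constant. Call such a solution \emph{reduced} if $x'\neq 0$; since $y'=-x'$, this is symmetric in $x$ and $y$. The first step is to show that every non-constant solution $(x,y)$ can be written uniquely as $(x_{0}^{p^{k}},y_{0}^{p^{k}})$ for a reduced solution $(x_{0},y_{0})$ and an integer $k\geq 0$. Indeed, if $x'=0$ then, $\bF$ being perfect, $x=x_{0}^{p}$ and $y=y_{0}^{p}$ for some non-constant $x_{0},y_{0}\in\cK^{*}$, and $(x_{0}+y_{0})^{p}=x_{0}^{p}+y_{0}^{p}=1$ forces $x_{0}+y_{0}=1$; each such step divides the naive height of $x$ (the larger of the degrees of numerator and denominator of $x$ in lowest terms) by $p$, so after finitely many steps one reaches a reduced solution, and uniqueness is clear from the injectivity of Frobenius.

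The point now is that the reduced solution $(x_{0},y_{0})$ produced this way need \emph{not} lie in $\Gamma\times\Gamma$ — one only knows that $x_{0}^{p^{k}}$ and $y_{0}^{p^{k}}$ lie in $\Gamma$ — so I would pass to the $p$-division hull $\widehat{\Gamma}=\{z\in\cK^{*}:\ z^{p^{m}}\in\Gamma\text{ for some }m\geq 0\}$ and let $\scrX$ be the set of reduced solutions lying in $\widehat{\Gamma}\times\widehat{\Gamma}$; then, by the previous step, every solution in $(\Gamma\times\Gamma)\setminus(\bar{\bF}\times\bar{\bF})$ is a Frobenius twist of a member of $\scrX$, and it remains only to bound $|\scrX|$. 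Now $\widehat{\Gamma}$ still has rank $r$ (every element has a power in $\Gamma$, so $\widehat{\Gamma}\otimes\Q=\Gamma\otimes\Q$), and its torsion, consisting of roots of unity of $\cK$, lies in $\bF^{*}$ and so has order prime to $p$; hence $\widehat{\Gamma}/\widehat{\Gamma}^{p}$, where $\widehat{\Gamma}^{p}=\{g^{p}:\ g\in\widehat{\Gamma}\}$, is an $\F_{p}$-vector space of dimension at most $r$, so $|\widehat{\Gamma}/\widehat{\Gamma}^{p}|\leq p^{r}$.

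The heart of the argument is then to show that the map $\scrX\to(\widehat{\Gamma}/\widehat{\Gamma}^{p})\times(\widehat{\Gamma}/\widehat{\Gamma}^{p})$ sending a solution to its pair of classes is \emph{injective} and omits the pair of trivial classes; since the target has at most $p^{2r}$ elements, this yields $|\scrX|\leq p^{2r}-1$, as asserted. That the trivial pair is omitted is immediate, since a $p$-th power has vanishing derivative, so a reduced $x$ cannot lie in $\widehat{\Gamma}^{p}$. For injectivity, suppose $(x_{1},y_{1}),(x_{2},y_{2})\in\scrX$ have the same pair of classes: writing $x_{1}=x_{2}a^{p}$ and $y_{1}=y_{2}b^{p}$ with $a,b\in\widehat{\Gamma}$, subtracting $x_{2}+y_{2}=1$ from $x_{1}+y_{1}=1$, and using the characteristic-$p$ identity $a^{p}-1=(a-1)^{p}$ gives $x_{2}(a-1)^{p}+y_{2}(b-1)^{p}=0$; if moreover $a\neq 1$ and $b\neq 1$, then solving this together with $y_{2}=1-x_{2}$ and using $(a-1)^{p}-(b-1)^{p}=(a-b)^{p}$ yields $x_{2}=\bigl((b-1)/(b-a)\bigr)^{p}$, a $p$-th power, contradicting $x_{2}'\neq 0$; hence $a=b=1$ and $(x_{1},y_{1})=(x_{2},y_{2})$.

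The two places needing the most care are the bookkeeping in the descent — that it terminates and that its output genuinely lies in $\widehat{\Gamma}\times\widehat{\Gamma}$ with $\widehat{\Gamma}$ still of rank $r$ and torsion of order prime to $p$ — and the small case analysis in the injectivity step (the exceptional cases $a=1$, $b=1$, and $a=b$ must be disposed of separately). It is perhaps worth emphasizing that, in contrast with Ruzsa-type problems, no deep input about $\cK$ is used here: only that $\bF$ is perfect and that the roots of unity of $\cK$ are constants. Since only the qualitative conclusion is needed in the sequel, there is no harm in the slightly lossy final constant $p^{2r}-1$ (one in fact gets $(p^{r}-1)^{2}$).
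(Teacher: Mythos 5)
Your argument is correct. Note that the paper does not actually prove this proposition: it simply cites Voloch and \cite[Proposition~2.6]{BN18_SF}, so you have supplied a self-contained proof where the authors defer to the literature. What you wrote is essentially the standard Frobenius-descent argument behind those references: peel off $p$-th powers using $x'=0\Rightarrow x\in\cK^p$ (valid since $\bF$ is perfect) until the solution is ``reduced,'' then count reduced solutions by mapping injectively into $\bigl(\widehat{\Gamma}/\widehat{\Gamma}^{p}\bigr)^2\setminus\{(\bar 1,\bar 1)\}$. All the steps check out, including the case analysis in the injectivity claim ($a=1$, $b=1$, $a=b$ each force triviality, and $a\neq b$ with $a,b\neq 1$ makes $x_2$ a $p$-th power, contradicting $x_2'\neq 0$). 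The one spot where I would ask for a line more of justification is the assertion that $\widehat{\Gamma}$ ``still has rank $r$'': the tensor-with-$\Q$ remark by itself does not rule out something like $\Z[1/p]$, so you should observe that $\widehat{\Gamma}/(\widehat{\Gamma}\cap\bF^{*})$ is a subgroup of the free abelian group $\cK^{*}/\bF^{*}$ supported on the (finite) support of $\Gamma$, hence free of rank exactly $r$; alternatively, note directly that for any abelian group $G$ of rational rank $r$ whose torsion has order prime to $p$ one has $\dim_{\F_p}G/G^{p}\leq r$. With that caveat, your bound $|\widehat{\Gamma}/\widehat{\Gamma}^{p}|\leq p^{r}$ and the resulting $|\scrX|\leq p^{2r}-1$ are correct, and in fact, as you note, your injection lands in the product of the two nontrivial-coset sets, giving $(p^{r}-1)^{2}$.
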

		\begin{proof}
		This is well-known; see \cite{Vol98_TN} or
		\cite[Proposition~2.6]{BN18_SF}.
		\end{proof}
		

		\begin{proof}[Proof of Theorem~\ref{thm:linear bound}]
		Recall that we are given $\deg(f(U^n))\leq C_5n+C_6$. 
		Let $\delta=\deg(U)$. 
		Let $N$, $D_1$, and $D_2$ be large positive integers 
		that will be specified later. Consider the auxiliary
		function:
		$$g(A)=P(A)f(A)+Q(A)$$
		where $Q(X)\in \cA[X]$ (respectively $P(X)\in \cA[X]$) has degree at most $D_1/\delta$ (respectively $(D_1-C_5)/\delta$)
		and each of its coefficients is an element of $\cA$ with degree at most $D_2$ (respectively $D_2-C_6$).  
		There are at least $q^{D_1D_2/\delta}q^{(D_1-C_5)(D_2-C_6)/\delta}$
		many choices for the pair $(P,Q)$. Note that 
		$g$ satisfies the congruence condition:
		$$g(A+BC)\equiv g(A) \bmod C\ \text{for every $A,B\in\cA$ and $C\in\cP$}.$$
		 
		We have
		$\deg(g(U^n))\leq D_1n+D_2$
		for every $n$. Hence there are at most
		$$\prod_{n=0}^N q^{D_1n+D_2+1}=q^{(D_1N(N+1)/2) + D_2(N+1)+N+1}$$
		possibilities for the tuple $(g(1),g(U),\ldots,g(U^N))$.	
		Fix a small positive $\epsilon$ that will be specified
		later. Now we choose a large $D_1$, then let:
		$$N+1=\frac{2-\epsilon}{\delta}D_1\ \text{and}\ D_2=\frac{\delta}{\epsilon}N(N+1),$$ 
		so that
		\begin{align*}
		\frac{D_1N(N+1)}{2}+D_2(N+1)+N+1
		&=\frac{1}{\delta}\left((\epsilon D_1D_2/2)+(2-\epsilon)D_1D_2+(2-\epsilon)D_1\right)\\
		&<\frac{1}{\delta}\left(D_1D_2+(D_1-C_5)(D_2-C_6)\right).
		\end{align*}
		
		By the pigeonhole principle, there exist two distinct choices
		of $(P,Q)$ giving rise to the same tuple
		$(g(1),\ldots,g(U^N))$. Taking the difference, we conclude that
		there exist such $P$ and $Q$ so that
		$g(U^i)=P(U^i)f(U^i)+Q(U^i)=0$ for $0\leq i\leq N$.
		For every $n>N$, we have $g(U^n)\equiv 0$ mod $\rad(\Delta_{N,n,U})$. Recall the constants $C_8(p,U)$ and $C_9(N,p,U)$ from 
		Lemma~\ref{lem:product of t^i-1}. Since
		$\displaystyle 1-\frac{1}{p}+\frac{1}{p^2}-\frac{1}{p^3}>\frac{1}{2}$, by choosing a sufficiently large $D_1$ (which implies that $N$ is sufficiently large)
		and sufficiently small $\epsilon$, we have:
		$$1-\frac{1}{p}+\frac{1}{p^2}-\frac{1}{p^3}-\frac{C_8(p,U)}{\delta N}>\frac{N+1}{(2-\epsilon)N}.$$
		This implies that
		for all sufficiently large
		$n$, we have:
		\begin{align*}
		\delta\left(1-\frac{1}{p}+\frac{1}{p^2}-\frac{1}{p^3}\right)Nn
		-C_8(p,U)n-C_9(N,p,U)&>\frac{\delta}{2-\epsilon}(N+1)n+D_2\\
		&=D_1n+D_2.
		\end{align*} 
		Since the right-hand side of the preceding inequality is at least $\deg(g(U^n))$ while
		the left-hand side is at most  $\deg(\Delta_{N,n,U})$ by 
		Lemma~\ref{lem:product of t^i-1}, we have $g(U^n)=0$ for
		all sufficiently large $n$. Let $N_1$ be such that
		$g(U^n)=0$ for every $n\geq N_1$.
		
		Now consider an arbitrary $A\in \cA\setminus\{0\}$ then fix an 
		integer
		$M>\deg(g(A))$. We claim that there
		exists $n\geq N_1$ such that $A-U^n$ has an irreducible factor $T$ of degree at least $M$; once this is done we have that $g(A)\equiv g(U^n)=0\, (\bmod\,T)$, and this forces $g(A)=0$, since the degree of $T$ is strictly larger than the degree of $g(A)$. To see why there exists such an irreducible factor $T$, let $\Gamma$
		denote the subgroup of $\cK^{*}$ generated by $U$, $A$, and all the irreducible polynomials of degree less than $M$. Since $U$ is not the $p$-th power of an element in
		$\bar{\F}[t]$, there exists an irreducible polynomial
		in $\cA$ whose exponent in the unique factorization of $U$ is not divisible by $p$, i.e. $v(U)\not\equiv 0$ mod $p$ where
		$v$ is the associated discrete valuation. 
		Therefore the set 
		$\scrS:=\{n\geq N_1:\ nv(U)-v(A)\not\equiv 0 \bmod p\}$
		is infinite and for every $n\in \scrS$, we have $U^n/A$ is not the $p$-th power of an element in $\cK$. Let $r$ denote the rank of $\Gamma$. Whenever $A-U^n=B$ has only irreducible factors of degree less than $M$, we have that
		$(U^n/A,B/A)$ is a solution of the equation $x+y=1$ with $(x,y)\in\Gamma\times\Gamma$. By Proposition~\ref{prop:S-unit characteristic p}, there can be at most $p^{2r}-1$ elements $n\in\scrS$ such that $A-U^n$ has only irreducible factors of degree less than $M$ and this proves our claim. 
		
		Hence $g(A)=0$ for every $A\in\cA\setminus\{0\}$ and the congruence condition on $g$ gives $g(A)=0$ for every $A\in\cA$. Hence $P(A)f(A)+Q(A)=0$ for
		every $A\in\cA$. We must have $P(X)\neq 0$; since otherwise $P(X)=Q(X)=0$. For all $A\in\cA$ except the finitely many $A$ such that $P(A)=0$, we have
		$Q(A)/P(A)=-f(A)\in\cA$. This implies that $P(X)\mid Q(X)$ in $\cK[X]$, hence
		$f$ is a polynomial map, as desired.
		\end{proof}

	\section{A further question}\label{sec:further questions}	
	As mentioned in the introduction, it is an interesting problem to 
	strengthen \ref{thm:new main} by replacing the function $q^{\deg(A)}/\deg(A)$
	in \eqref{eq:1/27q} by a larger function. Let
	$$d_n:=\deg\left(\prod_{P\in\cP_{\leq n}^+}P\right)$$
	which is the degree of the product of all monic irreducible polynomials
	of degree at most $n$. It seems reasonable to ask the following:
	\begin{question}
	Suppose $f:\ \cA\rightarrow\cA$ such that
	$f(A+BP)\equiv f(A)$ mod $P$ for every $A,B\in\cA$ and $P\in\cP$
	and there exists $\epsilon\in(0,1)$ such that for all sufficiently large $n$,
	for all $A\in\cA$ of degree $n$, we have
	$$\deg(f(A))\leq (1-\epsilon)d_n.$$
	Is it true that $f$ is a polynomial map?
	\end{question}
		
	\bibliographystyle{amsalpha}
	\bibliography{pRuzsa} 	

\end{document}